\newtheorem{theorem}{Theorem}
\newtheorem{lemma}{Lemma}
\theoremstyle{remark}
\newtheorem{remark}{Remark}
\theoremstyle{definition}
\newtheorem{definition}{Definition}
\begin{document}

\begin{frontmatter}


\title{Detection of objects in noisy images and site percolation on square lattices}
\runtitle{Detection in noisy images and percolation.}


\begin{aug}
\author{\snm{Mikhail} \fnms{Langovoy}\corref{}\thanksref{t2}
\ead[label=e1]{langovoy@eurandom.tue.nl}}

\affiliation{
        EURANDOM,\\
         The Netherlands.}

\address{Mikhail Langovoy, Technische Universiteit Eindhoven, \\
EURANDOM, P.O. Box 513,
\\
5600 MB, Eindhoven, The Netherlands\\
\printead{e1}\\
Phone: (+31) (40) 247 - 8113\\
Fax: (+31) (40) 247 - 8190\\}

\and

\author{\snm{Olaf} \fnms{Wittich} \ead[label=e2]{o.wittich@tue.nl}}

\affiliation{
        Technische Universiteit Eindhoven and EURANDOM,\\
         The Netherlands.}

\address{Olaf Wittich, Technische Universiteit Eindhoven and \\
EURANDOM, P.O. Box 513,
\\
5600 MB, Eindhoven, The Netherlands\\
\printead{e2}\\
Phone: (+31) (40) 247 - 2499}

\thankstext{t2}{Corresponding author.}

\runauthor{M. Langovoy and O. Wittich}
\end{aug}

\begin{abstract}
We propose a novel probabilistic method for detection of objects in noisy images. The method uses results from percolation and random graph theories. We present an algorithm that allows to detect objects of unknown shapes in the presence of random noise. Our procedure substantially differs from wavelets-based algorithms. The algorithm has linear complexity and exponential accuracy and is appropriate for real-time systems. We prove results on consistency and algorithmic complexity of our procedure.\\
\end{abstract}


\begin{keyword}
\kwd{Image analysis} \kwd{signal detection} \kwd{image reconstruction} \kwd{percolation} \kwd{noisy image}
\end{keyword}

\end{frontmatter}

\section{Introduction}\label{Section1}

In this paper, we propose a new efficient technique for quick detection of objects in noisy images. Our approach uses mathematical percolation theory.

Detection of objects in noisy images is the most basic problem of image analysis. Indeed, when one looks at a noisy image, the first question to ask is whether there is any object at all. This is also a primary question of interest in such diverse fields as, for example, cancer detection (\cite{Cancer_Detection_1}), automated urban analysis (\cite{Road_Detection_IEEE}), detection of cracks in buried pipes (\cite{Sinha200658}), and other possible applications in astronomy, electron microscopy and neurology. Moreover, if there is just a random noise in the picture, it doesn't make sense to run computationally intensive procedures for image reconstruction for this particular picture. Surprisingly, the vast majority of image analysis methods, both in statistics and in engineering, skip this stage and start immediately with image reconstruction.


The crucial difference of our method is that we do not impose any shape or smoothness assumptions on the \emph{boundary} of the object. This permits the detection of nonsmooth, irregular or disconnected objects in noisy images, under very mild assumptions on the object's interior. This is especially suitable, for example, if one has to detect a highly irregular non-convex object in a noisy image. Although our detection procedure works for regular images as well, it is precisely the class of irregular images with unknown shape where our method can be very advantageous.

Many modern methods of object detection, especially the ones that are used by practitioners in medical image analysis require to perform at least a preliminary reconstruction of the image in order for an object to be detected. This usually makes such methods difficult for a rigorous analysis of performance and for error control. Our approach is free from this drawback. Even though some papers work with a similar setup (see \cite{Arias-Castro_etal}), both our approach and our results differ substantially from this and other studies of the subject. We also do not use any wavelet-based techniques in the present paper.

We view the object detection problem as a nonparametric hypothesis testing problem within the class of discrete statistical inverse problems.

In this paper, we propose an algorithmic solution for this nonparametric hypothesis testing problem. We prove that our algorithm has linear complexity in terms of the number of pixels on the screen, and this procedure is not only asymptotically consistent, but on top of that has accuracy that grows exponentially with the "number of pixels" in the object of detection. The algorithm has a built-in data-driven stopping rule, so there is no need in human assistance to stop the algorithm at an appropriate step.

In this paper, we assume that the original image is black-and-white and that the noisy image is grayscale. While our focusing on grayscale images could have been a serious limitation in case of image reconstruction, it essentially does not affect the scope of applications in the case of object detection. Indeed, in the vast majority of problems, an object that has to be detected either has (on the picture under analysis) a color that differs from the background colours (for example, in roads detection), or has the same colour but of a very different intensity, or at least an object has a relatively thick boundary that differs in colour from the background. Moreover, in practical applications one often has some prior information about colours of both the object of interest and of the background. When this is the case, the method of the present paper is applicable after simple rescaling of colour values.

The paper is organized as follows. In Section \ref{Section_Model_Assumptions} we describe some general assumptions that one makes in statistical processing of digital images. The statistical model itself is described in details in Section \ref{Section_Model}. Suitable thresholding for noisy images is crucial in our method and is developed in Section \ref{Section_Thresholding}. Our algorithm for object detection is presented in Section \ref{Section2}. Theorem \ref{Theorem1} is the main result about consistency and computational complexity of the algorithm. Section \ref{Section_Proofs} is devoted to the proof of the main theorem.

\section{Basic framework}\label{Section_Model_Assumptions}

In this section we discuss some natural and very basic assumptions that we impose on our model.

Suppose we have an analogous two-dimensional image. For numerical or graphical processing of images on computers, the image always has to be discretized. This is achieved via a \emph{pixelization} procedure.

A typical example of pixelization procedure is as follows. Consider an $N \times N$ grid on the square containing the image, and color black those and only those pixels for whose the pixel's \emph{interior} has common points with the original image. The result is called a pixelized picture.

After certain procedure of pixelization, each pixel gets a real number attached to it. This assumption means that our only data available are given as an array of $N^2$ real numbers ${\{Y_{ij}\}}_{i,j=1}^N$. In order to perform statistical image analysis, we will use only these $N^2$ numbers plus our model assumptions. This leads us to the following basis assumption.\smallskip


$\langle  \textbf{A1} \rangle\quad$ Assume that we have a square screen of $N \times N$ pixels, where we \par
\noindent\quad\quad\quad\quad\quad observe pixelized images. We assume that we are getting our \par
\noindent\quad\quad\quad\quad\quad information about the image from this screen alone. \par\smallskip




In the present paper we are interested in detection of objects that have a \emph{known} colour. And this colour is different from the colour of the background. Mathematically, this can be roughly formulated as follows.\smallskip

$\langle  \textbf{A2} \rangle\quad$ The true (non-noisy) images are black-and-white. \par\smallskip

Indeed, we are free to assume that all the pixels that belong to the meaningful object within the digitalized image have the value 1 attached to them. We can call this value a \emph{black colour}. Additionally, assume that the value 0 is attached to those and only those pixels that do not belong to the object in the \emph{true} image. If the number 0 is attached to the pixel, we call this pixel \emph{white}.
\smallskip

In this paper we always assume that we observe a noisy image. The noise itself can be caused, for example, by channel defects or other transmission errors, distortions, etc. Medical scans provide a classic example of noisy images: the scan is always made indirectly, through the body. In astronomy, one also observes noisy images: there are optical and technological errors, atmosphere conditions, etc. The observed values on pixels could be different from 0 and 1. This means that we will actually always have a greyscale image in the beginning of our analysis. \smallskip

$\langle  \textbf{A3} \rangle\quad$ On each pixel we have random noise that has the distribution function\par

\noindent\quad\quad\quad\quad\quad $F$, where $F$ has mean 0 and known variance $\sigma > 0$; the noise at \par
\noindent\quad\quad\quad\quad\quad each pixel is completely independent from noises on other pixels.
\smallskip

\noindent An important special case is when the noise is normally distributed, i.e. $F = N(0, \sigma)$, where $N(0, \sigma)$ stands for the normal distribution with mean 0 and known variance $\sigma > 0$. However, in general, the noise doesn't need to be smooth, symmetric or even continuous.
\smallskip

\begin{remark}\label{Remark1}
We limit ourselves to two-dimensional images only. However, our method makes it possible to analyze also $n-$dimensional images, for each $n \geq 1$.

\end{remark}
\smallskip

\begin{remark}\label{Remark2}
It doesn't really matter if the screen is square or rectangular. We consider a square screen only to simplify our notation.
\end{remark}
\smallskip

\begin{remark}\label{Remark3} It is behind the scope of the present paper to discuss various ways to pixelize real-world analogous images. One possible method of pixelization is often used in literature (see \cite{Arias-Castro_etal} and related references).

The way of pixelization plays an important role mostly for those problems in image analysis where one needs to give asymptotic estimates for the boundary of the object. For example, consider pixelizing a plane curve. Then one can consider also colouring black those pixels that have at least 4 common points with the curve (not necessarily 4 interior points), etc.  However, for simpler problems like image detection (or crude estimation of object's shape or interior) it is often not important how many pixels you colour at the boundary of your image.
\end{remark}
\smallskip

\begin{remark}\label{Remark4}
Our method works not only for normal or i.i.d. noise. We have chosen this type of noise in order to achieve relatively simple and explicit results. The method itself allows the treatment of more complicated situations, such as singular noise, discrete noise, pixel colour flipping, different noise in different screen areas, and dependent noise.
\end{remark}
\smallskip

%

\section{Statistical Model}\label{Section_Model}

Now we are able to formulate the model more formally. We have an $N \times N$ array of observations, i.e. we observe $N^2$ real numbers ${\{Y_{ij}\}}_{i,j=1}^N$. Denote the true value on the pixel $(i, j)$, $1 \leq i, j \leq N$, by $Im_{ij}$, and the corresponding noise by $\sigma \varepsilon_{ij}$. Therefore, by our model assumptions,

\begin{equation}\label{1}
Y_{ij} = Im_{ij} + \sigma \varepsilon_{ij}\,,
\end{equation}

\noindent where $1 \leq i, j \leq N$, $\sigma > 0$ and, in accordance with assumption $\langle A2 \rangle$,

\begin{equation}\label{3}
Im_{ij} = \left\{
           \begin{array}{ll}
             1, & \hbox{if $(i,j)$ belongs to the object;} \\
             0, & \hbox{if $(i,j)$ does not belong to the object.}
           \end{array}
         \right.
\end{equation}

\noindent To stress the dependence on the noise level $\sigma$, we write assumption $\langle A3 \rangle$ in the following way:

\begin{equation}\label{2}
\varepsilon_{ij} \sim F, \quad \mathbb{E}\, \varepsilon_{ij} = 0,  \quad Var\, \varepsilon_{ij} = 1\,.
\end{equation}

\noindent The noise here doesn't need to be smooth, symmetric or even continuous. Moreover, all the results below are easily transferred to the even more general case when the noise has arbitrary but known distribution function $F_{gen}$; it is not necessary that the noise has mean $0$ and finite variance. The only adjustment to be made is to replace in all the statements quantities of the form $F \bigr(\,\cdot\,/ \sigma\bigr)$ by the quantities $F_{gen}(\,\cdot\,)$. The Algorithm 1 below and the main Theorem \ref{Theorem1} are valid without any changes for a general noise distribution $F_{gen}$ satisfying (\ref{8}) and (\ref{9}).

Now we can proceed to preliminary quantitative estimates. If a pixel $(i, j)$ is white in the original image, let us denote the corresponding probability distribution of $Y_{ij}$ by $P_0$. For a black pixel $(i, j)$ we denote the corresponding distribution of $Y_{ij}$ by $P_1$. We are free to omit dependency of $P_0$ and $P_1$ on $i$ and $j$ in our notation, since all the noises are independent and identically distributed.


\begin{lemma}\label{Lemma1}
Suppose pixel $(i, j)$ has white colour in the original image. Then for all $y \in \mathbb{R}$:

\begin{equation}\label{4}
P_0 (\,Y_{ij} \geq y\, ) = 1 - F \biggr(\frac{y}{\sigma}\biggr)\,,
\end{equation}

\noindent where $F$ is the distribution function of the standardized noise.
\end{lemma}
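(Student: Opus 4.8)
The plan is to reduce the claim directly to the model relations (\ref{1})--(\ref{2}); the identity is nothing more than a change of variables in the one-dimensional law of a single pixel, so no percolation machinery is needed here. First I would use assumption $\langle A2\rangle$ together with (\ref{3}): since the pixel $(i,j)$ is white in the original image, $Im_{ij}=0$, and the defining equation (\ref{1}) collapses to $Y_{ij}=\sigma\,\varepsilon_{ij}$. This is the only point at which the colour hypothesis is used.

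Next, because $\sigma>0$ is a fixed deterministic constant, dividing by $\sigma$ preserves the inequality, so the event $\{Y_{ij}\geq y\}$ coincides with $\{\varepsilon_{ij}\geq y/\sigma\}$ and hence
\begin{equation*}
P_0(Y_{ij}\geq y)=P\bigl(\sigma\,\varepsilon_{ij}\geq y\bigr)=P\Bigl(\varepsilon_{ij}\geq\tfrac{y}{\sigma}\Bigr).
\end{equation*}
Passing to the complementary event and recalling from (\ref{2}) that $F$ is the distribution function of the standardized noise $\varepsilon_{ij}$ then gives $P(\varepsilon_{ij}\geq y/\sigma)=1-F(y/\sigma)$, which is exactly (\ref{4}).

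The only genuinely delicate step is this last one. With the usual right-continuous convention $F(t)=P(\varepsilon_{ij}\leq t)$ one has $P(\varepsilon_{ij}\geq t)=1-F(t)+P(\varepsilon_{ij}=t)$, so the clean identity $1-F(y/\sigma)$ holds verbatim only when $F$ is continuous at $t=y/\sigma$, i.e. when the standardized noise has no atom there. Since the paper deliberately allows discrete or non-continuous noise, I would either adopt the standing convention $F(t)=P(\varepsilon_{ij}<t)$, or simply record that the stray atomic term $P(\varepsilon_{ij}=y/\sigma)$ is harmless for the thresholding estimates developed later; in the flagship case $F=N(0,\sigma)$ the point is moot. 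Apart from fixing this convention I expect no obstacle, as everything else is a one-line substitution.
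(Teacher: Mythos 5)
Your proof is correct and follows essentially the same one-line route as the paper: substitute $Im_{ij}=0$ into (\ref{1}) and pass to the complement, since the paper's own proof is exactly $P_0(Y_{ij}\geq y)=1-P(\sigma\varepsilon_{ij}<y)=1-F(y/\sigma)$. Your closing caveat about the atom $P(\varepsilon_{ij}=y/\sigma)$ is a fair point that the paper itself silently glosses over (note that its Lemma \ref{Lemma1} implicitly uses $F(t)=P(\varepsilon_{ij}<t)$ while its Lemma \ref{Lemma2} uses $F(t)=P(\varepsilon_{ij}\leq t)$, which agree only at continuity points), so recording a fixed convention as you propose is a genuine, if minor, improvement rather than a different approach.
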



\begin{proof} (Lemma \ref{Lemma1}): By (\ref{2}),

\[
P_0 (\,Y_{ij} \geq y\, ) = 1 - P(\sigma \varepsilon_{ij} < y ) = 1 - F \biggr(\frac{y}{\sigma}\biggr)\,.
\]

\end{proof}


\begin{lemma}\label{Lemma2}
Suppose pixel $(i, j)$ has black colour in the original image. Then for all $y \in \mathbb{R}$:

\begin{equation}\label{5}
P_1 (\,Y_{ij} \leq y\, ) = F \biggr(\frac{y-1}{\sigma}\biggr)\,.
\end{equation}

\end{lemma}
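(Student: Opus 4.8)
The plan is to prove this exactly as in Lemma~\ref{Lemma1}, simply replacing the value $Im_{ij}=0$ of a white pixel by the value $Im_{ij}=1$ of a black pixel, and tracking the lower tail of $Y_{ij}$ instead of the upper one. The whole statement reduces to a one-line computation once the observation model (\ref{1}) is unfolded for a black pixel.

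First I would note that, by the colour assignment (\ref{3}), a black pixel has $Im_{ij}=1$, so the model (\ref{1}) gives $Y_{ij}=1+\sigma\varepsilon_{ij}$. Next I would rewrite the event appearing in the definition of $P_1$:
\[
P_1(\,Y_{ij}\leq y\,)=P\bigl(1+\sigma\varepsilon_{ij}\leq y\bigr)=P\!\left(\varepsilon_{ij}\leq\frac{y-1}{\sigma}\right),
\]
where the last equality uses only that $\sigma>0$, so dividing the inequality by $\sigma$ preserves its direction. Finally, by (\ref{2}) the standardized noise $\varepsilon_{ij}$ has distribution function $F$, whence $P(\varepsilon_{ij}\leq (y-1)/\sigma)=F((y-1)/\sigma)$, which is precisely (\ref{5}).

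There is essentially no obstacle here: the argument is a direct translation of Lemma~\ref{Lemma1}. The only subtlety worth flagging is the usual left-/right-continuity convention for the distribution function at atoms of $F$. Note, however, that because the event $\{Y_{ij}\leq y\}$ is a \emph{closed} (non-strict) inequality, it matches the standard convention $F(t)=P(\varepsilon_{ij}\leq t)$ directly; so, unlike the complementary computation in Lemma~\ref{Lemma1} (which passed through a strict inequality on the noise), no care about atoms of $F$ is needed in this case.
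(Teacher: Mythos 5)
Your proof is correct and follows essentially the same one-line computation as the paper: unfold $Y_{ij}=1+\sigma\varepsilon_{ij}$, divide by $\sigma>0$, and apply the definition of $F$ from (\ref{2}). Your side remark about the non-strict inequality matching the convention $F(t)=P(\varepsilon_{ij}\leq t)$ is a valid (and slightly more careful) observation than the paper makes, but it does not change the argument.
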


\begin{proof} (Lemma \ref{Lemma2}): By (\ref{2}) again, we have

\[
P_1 (\,Y_{ij} \leq y\, ) = P(1 + \sigma \varepsilon_{ij} \leq y ) = P(\sigma \varepsilon_{ij} \leq y-1 ) = F \biggr(\frac{y-1}{\sigma}\biggr)\,.
\]

\end{proof}

\section{Thresholding and Graphs of Images}\label{Section_Thresholding}

Now we are ready to describe one of the main ingredients of our method: the \emph{thresholding}. The idea of the thresholding is as follows: in the noisy grayscale image ${\{Y_{ij}\}}_{i,j=1}^N$, we pick some pixels that look as if their real colour was black. Then we colour all those pixels black, irrespectively of the exact value of grey that was observed on them. We take into account the intensity of grey observed at those pixels only once, in the beginning of our procedures. The idea is to think that some pixel "seems to have a black colour" when it is not very likely to obtain the observed grey value when adding a "reasonable" noise to a white pixel.

We colour white all the pixels that weren't coloured black at the previous step. At the end of this procedure, we would have a transformed vector of 0's and 1's, call it $\{\overline{Y}_{i,j}\}_{i,j=1}^{N}$. We will be able to analyse this transformed picture by using certain results from the mathematical theory of percolation. This is the main goal of the present paper. But first we have to give more details about the thresholding procedure.

Let us fix, for each $N$, a real number $\alpha_0(N) > 0$, $\alpha_0(N) \leq 1$, such that there exists $\theta (N) \in \mathbb{R}$ satisfying the following condition:

\begin{equation}\label{6}
P_0 (\,Y_{ij} \geq \theta (N)\, ) \,\leq\, \alpha_0(N)\,.
\end{equation}


\begin{lemma}\label{Lemma3}
Assume that (\ref{6}) is satisfied for some $\theta (N) \in \mathbb{R}$. Then for the smallest possible $\theta (N)$ satisfying (\ref{6}) it holds that

\begin{equation}\label{7}
F \biggr(\frac{\,\theta (N)}{\sigma}\biggr)\,=\, 1 - \alpha_0(N)\,.
\end{equation}
\end{lemma}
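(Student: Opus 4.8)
The plan is to translate the threshold condition (\ref{6}) into a statement about the distribution function $F$ by means of Lemma \ref{Lemma1}, and then to exploit the monotonicity and continuity of $F$ to pin down the value of $F$ at the \emph{minimal} admissible threshold. First I would apply Lemma \ref{Lemma1}, equation (\ref{4}): since $P_0(Y_{ij} \geq \theta(N)) = 1 - F(\theta(N)/\sigma)$, condition (\ref{6}) is equivalent to $F(\theta(N)/\sigma) \geq 1 - \alpha_0(N)$. Hence the set of admissible thresholds is $A = \{\,\theta \in \mathbb{R} : F(\theta/\sigma) \geq 1 - \alpha_0(N)\,\}$, and the object of interest is $\theta^\ast := \min A$, whose existence I must justify before computing $F(\theta^\ast/\sigma)$.

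Next I would show that $A$ is a closed half-line $[\theta^\ast, \infty)$. Monotonicity of $F$ together with $\sigma > 0$ makes $A$ an up-set: if $\theta \in A$ and $\theta' \geq \theta$ then $\theta' \in A$, so $A$ is an interval unbounded above. Viewing $A$ as the preimage of the closed set $[1 - \alpha_0(N), \infty)$ under the continuous map $\theta \mapsto F(\theta/\sigma)$ shows that $A$ is closed, hence contains its infimum. This gives existence of $\theta^\ast = \min A$ and, in particular, the inequality $F(\theta^\ast/\sigma) \geq 1 - \alpha_0(N)$.

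Finally I would obtain the reverse inequality from minimality. For every $\theta < \theta^\ast$ we have $\theta \notin A$, that is $F(\theta/\sigma) < 1 - \alpha_0(N)$; letting $\theta \uparrow \theta^\ast$ and invoking continuity of $F$ yields $F(\theta^\ast/\sigma) \leq 1 - \alpha_0(N)$. Combining this with the inequality from the previous step gives the asserted equality (\ref{7}).

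The main obstacle, and really the only delicate point, is the continuity of $F$: it is precisely continuity (at the relevant point $\theta^\ast/\sigma$) that forces \emph{equality} rather than a mere inequality in (\ref{7}), since otherwise a jump of $F$ at $\theta^\ast/\sigma$ would leave $F(\theta^\ast/\sigma)$ strictly above $1 - \alpha_0(N)$. This is consistent with the convention already adopted in the proof of Lemma \ref{Lemma1}, where the strict and non-strict inequalities for the noise are identified; for genuinely discontinuous $F$ one would have to replace (\ref{7}) by the two-sided bound $1 - \alpha_0(N) \leq F(\theta^\ast/\sigma) \leq 1 - \alpha_0(N)^{-}$ read off from the left and right limits.
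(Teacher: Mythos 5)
Your proof is correct and is essentially the paper's argument made explicit: the paper disposes of Lemma~\ref{Lemma3} with the one-line proof ``Obvious by Lemma~\ref{Lemma1},'' and your route --- rewriting (\ref{6}) via (\ref{4}) as $F(\theta(N)/\sigma) \geq 1 - \alpha_0(N)$, noting the admissible set is a closed half-line by monotonicity, and squeezing from the left to get equality at the minimum --- is exactly the intended expansion of that one-liner. Your closing caveat is well taken and in fact goes beyond the paper: equality in (\ref{7}) genuinely requires continuity of $F$ at $\theta^\ast/\sigma$, which the paper's assumptions do not guarantee (it explicitly allows discontinuous noise), and for general $F$ the correct conclusion is the two-sided bound $F\bigl((\theta^\ast/\sigma)^-\bigr) \leq 1 - \alpha_0(N) \leq F(\theta^\ast/\sigma)$, with the minimum $\theta^\ast$ still existing by right-continuity of $F$ --- which is the properly stated version of the slightly garbled bound in your last sentence.
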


\begin{proof} (Lemma \ref{Lemma3}): Obvious by Lemma \ref{Lemma1}.
\end{proof}

In this paper we will always pick $\alpha_0(N)\,\equiv\,\alpha_0$ for all $N \in \mathbb{N}$, for some constant $\alpha_0 > 0$. But we will need to have varying $\alpha_0(\,\cdot)$ for our future research.

We are prepared to describe our thresholding principle formally. Let $p_{c}^{site}$ be the \emph{critical probability for site percolation} on $\mathbb{Z}^2$ (see \cite{Grimmett} for definitions).

As a first step, we transform the observed noisy image $\{Y_{i,j}\}_{i,j=1}^{N}$ in the following way: for all $1 \leq i, j \leq N$,\par\smallskip

1. $\quad$ If $Y_{ij} \geq \theta (N)$, set $\overline{Y}_{ij} := 1$ (i.e., in the transformed picture the corresponding pixel is coloured black).\smallskip

2. $\quad$ If $Y_{ij} < \theta (N)$, set $\overline{Y}_{ij} := 0$ (i.e., in the transformed picture the corresponding pixel is coloured white).\smallskip

\begin{definition}\label{Definition1}
The above transformation is called \emph{thresholding at the level} $\theta (N)$. The resulting vector $\{\overline{Y}_{i,j}\}_{i,j=1}^{N}$ of $N^2$ values (0's and 1's) is called a \emph{thresholded picture}.
\end{definition}
\smallskip

Suppose for a moment that we are given the original black and white image without noise. One can think of pixels from the original picture as of vertices of a planar graph. Furthermore, let us colour these $N^2$ vertices with the same colours as the corresponding pixels of the original image. We obtain a graph $G$ with $N^2$ black or white vertices and (so far) no edges.



We add edges to $G$ in the following way. If any two \emph{black} vertices are neighbours (i.e. the corresponding pixels have a common side), we connect these two vertices with a \emph{black} edge. If any two \emph{white} vertices are neighbours, we connect them with a \emph{white} edge. We will not add any edges between non-neighbouring points, and we will not connect vertices of different colours to each other.

Finally, we see that it is possible to view our black and white pixelized picture as a collection of black and white "clusters" on the very specific planar graph (a square $N \times N$ subset of the $\mathbb{Z}^2$ lattice).

\begin{definition}\label{Definition2}
We call graph $G$ the \emph{graph of the (pure) picture}.
\end{definition}

This is a very special planar graph, so there are many efficient algorithms to work with black and white components of the graph. Potentially, they could be used to efficiently process the picture. However, the above representation of the image as a graph is lost when one considers noisy images: because of the presence of random noise, we get many gray pixels. So, the above construction doesn't make sense anymore. We overcome this obstacle with the help of the above thresholding procedure.

We make $\theta (N)-$thresholding of the noisy image $\{Y_{i,j}\}_{i,j=1}^{N}$ as in Definition \ref{Definition1}, but with a very special value of $\theta (N)$. Our goal is to choose $\theta (N)$ (and corresponding $\alpha_0(N)$, see (\ref{6})) such that:

\begin{eqnarray}
  1 - F \biggr(\frac{\theta(N)}{\sigma}\biggr) &<& p_{c}^{site}\,,\label{8} \\
  p_{c}^{site} &<& 1 - F \biggr(\frac{\theta(N) -1 }{\sigma}\biggr)\,,\label{9}
\end{eqnarray}

\noindent where $p_{c}^{site}$ is the critical probability for site percolation on $\mathbb{Z}^2$ (see \cite{Grimmett}, \cite{Kesten}). In case if both (\ref{8}) and (\ref{9}) are satisfied, what do we get?

After applying the $\theta (N)-$thresholding on the noisy picture $\{Y_{i,j}\}_{i,j=1}^{N}$, we obtained a (random) black-and-white image $\{\overline{Y}_{i,j}\}_{i,j=1}^{N}$. Let $\overline{G}_N$ be the graph of this image, as in Definition \ref{Definition2}.

Since $\overline{G}_N$ is a random, we actually observe the so-called \emph{site percolation} on black vertices within the subset of $\mathbb{Z}^2$. From this point, we can use results from percolation theory to predict formation of black and white clusters on $\overline{G}_N$, as well as to estimate the number of clusters and their sizes and shapes. Relations (\ref{8}) and (\ref{9}) are crucial here.

To explain this more formally, let us split the set of vertices $\overline{V}_N$ of the graph $\overline{G}_N$ into to groups: $\overline{V}_N = V_{N}^{im} \cup V_{N}^{out}$, where $V_{N}^{im} \cap V_{N}^{out} = \emptyset$, and $V_{N}^{im}$ consists of those and only those vertices that correspond to pixels belonging to the original object, while $V_{N}^{out}$ is left for the pixels from the background. Denote $G_{N}^{im}$ the subgraph of $\overline{G}_N$ with vertex set $V_{N}^{im}$, and denote $G_{N}^{out}$ the subgraph of $\overline{G}_N$ with vertex set $V_{N}^{out}$.

If (\ref{8}) and (\ref{9}) are satisfied, we will observe a so-called \emph{supercritical percolation} of black clusters on $G_{N}^{im}$, and a \emph{subcritical} percolation of black clusters on  $G_{N}^{out}$. Without going into much details on percolation theory (the necessary introduction can be found in \cite{Grimmett} or \cite{Kesten}), we mention that there will be a high probability of forming relatively large black clusters on $G_{N}^{im}$, but there will be only little and scarce black clusters on $G_{N}^{out}$. The difference between the two regions will be striking, and this is the main component in our image analysis method.

In this paper, mathematical percolation theory will be used to derive quantitative results on behaviour of clusters for both cases. We will apply those results to build  efficient randomized algorithms that will be able to detect and estimate the object $\{Im_{i,j}\}_{i,j=1}^{N}$ using the difference in percolation phases on $G_{N}^{im}$ and $G_{N}^{out}$.

If the \emph{noise level} $\sigma$ is not too large, then (\ref{8}) and (\ref{9}) are satisfied for some $\theta (N) \in (0,1)$. Indeed, one simply has to pick $\theta (N)$ close enough to 1. On the other hand, if $\sigma$ is relatively large, it may happen that (\ref{8}) and (\ref{9}) cannot both be satisfied at the same time.

\begin{definition}\label{Definition3}
In the framework defined by relations (\ref{1})-(\ref{3}) and assumptions $\langle A1 \rangle$ - $\langle A3 \rangle$, we say that the noise level $\sigma$ is \emph{small enough} (or \emph{1-small}), if the system of inequalities (\ref{8}) and (\ref{9}) is satisfied for some $\theta (N) \in \mathbb{R}$, for all $N \in \mathbb{N}$.
\end{definition}

A very important practical issue is that of choosing an optimal threshold value $\theta$. From a purely theoretical point of view, this is not a big issue: once (\ref{8}) and (\ref{9}) holds for some $\theta$, it is guaranteed that after $\theta -$thresholding we will observe \emph{qualitatively} different behaviour of black and white clusters in or outside of the true object. We will make use of this in what follows.

However, for practical computations, especially for moderate values of $N$, the value of $\theta$ is important. Since the goal is to make percolations on $V_{N}^{im}$ and $V_{N}^{out}$ look as different as possible, one has to make the corresponding percolation probabilities for black colour, namely,

\[
1 - F \biggr(\frac{\theta(N)}{\sigma}\biggr) \quad\mbox{and}\quad 1 - F \biggr(\frac{\theta(N) -1 }{\sigma}\biggr)\,,
\]

\noindent as different as possible both from each other and from the critical probability $p_{c}^{site}$. There can be several reasonable ways for choosing a suitable threshold. For example, we can propose to choose $\theta (N)$ as a maximizer of the following function:

\begin{equation}\label{10}
\biggr(1 - F \biggr(\frac{\theta(N)}{\sigma}\biggr) - p_{c}^{site}\biggr)^2 +\, \biggr(1 - F \biggr(\frac{\theta(N)-1}{\sigma}\biggr) - p_{c}^{site}\biggr)^2\,,
\end{equation}

\noindent provided that (\ref{8}) and (\ref{9}) holds. Alternatively, we can propose to use a maximizer of

\begin{equation}\label{11}
sign \biggr(1 - F \biggr(\frac{\theta(N)-1}{\sigma}\biggr) - p_{c}^{site}\biggr) \,+\, sign\biggr( p_{c}^{site} - 1 + F \biggr(\frac{\theta(N)}{\sigma}\biggr)\biggr)\,.
\end{equation}


\section{Object detection}\label{Section2}

%


We either observe a blank white screen with accidental noise or there is an actual object in the blurred picture. In this section, we propose an algorithm to make a decision on which of the two possibilities is true. This algorithm is a statistical testing procedure. It is designed to solve the question of testing $H_0:\, I_{ij}=0\,\, \mbox{for all}\,\, 1 \leq i, j \leq N$ versus $H_1:\, I_{ij} = 1 \,\,\mbox{for some}\,\, i, j$.

Let us choose $\alpha (N) \in (0, 1)$ - the \emph{probability of false detection} of an object. More formally, $\alpha (N)$ is the maximal probability that the algorithm finishes its work with the decision that there was an object in the picture, while in fact there was just noise. In statistical terminology, $\alpha (N)$ is the probability of an error of the first kind.

We allow $\alpha$ to depend on $N$; $\alpha(N)$ is connected with complexity (and expected working time) of our randomized algorithm.

Since in our method it is crucial to observe some kind of percolation in the picture (at least within the image), the image has to be "not too small" in order to be detectable by the algorithm: one can't observe anything percolation-alike on just a few pixels. We will use percolation theory to determine how "large" precisely the object has to be in order to be detectable. Some size assumption has to be present in any detection problem: for example, it is hopeless to detect a single point object on a very large screen even in the case of a moderate noise.

For an easy start, we make the following (way too strong) largeness assumptions about the image:\par\smallskip

$\langle \textbf{D1} \rangle \quad$ Assume that the image $\{\,(i, j)\, |\, 1 \leq i, j \leq N,\, I_{ij}=1\, \}$ contains a completely black square with the side of size at least $\varphi_{im}(N)$ pixels, where

\begin{equation}\label{12}
\lim_{N\to\infty} \frac{\,\log \frac{1}{\,\alpha(N)\,}\,}{\,\varphi_{im}(N)\,}\, = 0\,.
\end{equation}

\begin{equation}\label{14}
\langle \textbf{D2} \rangle \quad\quad \lim_{N \to\infty} \frac{\,\varphi_{im} (N)\,}{\,\log N\,} = \infty\,.\quad\quad\quad\quad\quad\quad\quad\quad\quad\quad\quad\quad\quad\quad\quad\quad\quad\quad\quad
\end{equation}


\noindent Furthermore, we assume the obvious consistency assumption

\begin{equation}\label{13}
\varphi_{im}(N)\, \leq N\,.
\end{equation}

\noindent Assumptions $\langle D1 \rangle$ and $\langle D2 \rangle$ are \emph{sufficient} conditions for our algorithm to work. They are way too strong for our purposes. It is possible to relax (\ref{14}) and to replace a square in $\langle D1 \rangle$ by a triangle-shaped figure.

Although the above two conditions are of asymptotic character, most of our estimates below are valid for finite $N$ as well. Nevertheless, it is important to remark here that asymptotic results for $N\to\infty$ also have interesting practical consequences. More specifically, assume that physically we always have screens of a fixed size, but the resolution $N^2$ of our cameras can grow unboundedly. When $N$ tends to infinity, we see that the same physical object that has, say, 1mm in width and in length, contains more and more pixels on the pixelized image. Therefore, for high-resolution pictures, our algorithm could detect fine structures (like nerves etc.) that are not directly visible by a human eye. \smallskip

Now we are ready to formulate our \emph{Detection Algorithm}. Fix the false detection rate $\alpha (N)$ before running the algorithm.\par\smallskip

\noindent \textbf{Algorithm 1 (Detection).}

\begin{itemize}
\item
Step 0. Find an optimal $\theta (N)$. \medskip

\item
Step 1. Perform $\theta(N)-$thresholding of the noisy picture $\{Y_{i,j}\}_{i,j=1}^{N}$. \medskip

\item
Step 2. Until\par\smallskip

\quad\quad\quad\quad\quad\quad\{\{Black cluster of size $\varphi_{im} (N) $ is found\}\par
\quad\quad\quad\quad or \par
\quad\quad\quad\quad\quad\quad\{all black clusters are found\}\}, \par\smallskip

\quad\quad\quad\quad Run depth-first search (\cite{Tarjan}) on the graph $\overline{G}_N$ of\par
\quad\quad\quad\quad\quad\quad\, the $\theta(N)-$thresholded picture $\{{\overline{Y}}_{i,j}\}_{i,j=1}^{N}$\medskip

\item
Step 3. If a black cluster of size $\varphi_{im} (N) $ was found, report that \par

\quad\quad\quad\quad an object was detected\bigskip

\item

Step 4. If no black cluster was larger than $\varphi_{im} (N) $, report that \par

\quad\quad\quad\quad there is no object.
\end{itemize}

\noindent At Step 2 our algorithm finds and stores not only sizes of black clusters, but also coordinates of pixels constituting each cluster. We remind that $\theta(N)$ is defined as in (\ref{6}), $\overline{G}_N$ and $\{{\overline{Y}}_{i,j}\}_{i,j=1}^{N}$ were defined in Section \ref{Section1}, and $\varphi_{im} (N)$ is any function satisfying (\ref{12}). The depth-first search algorithm is a standard procedure used for searching connected components on graphs. This procedure is a deterministic algorithm. The detailed description and rigorous complexity analysis can be found in \cite{Tarjan}, or in the classic book \cite{Aho_Hopcroft_Ullman}, Chapter 5.


Let us prove that Algorithm 1 works, and determine its complexity.

\begin{theorem}\label{Theorem1}
Let $\sigma$ be 1-small. Suppose assumptions $\langle D1 \rangle$ and $\langle D2 \rangle$ are satisfied. Then\smallskip

\begin{enumerate}
  \item Algorithm 1 finishes its work in $O(N^2)$ steps, i.e. is linear.\medskip

  \item If there was an object in the picture, Algorithm 1 detects it with probability at least $(1 - \exp(-C_1(\sigma) \varphi_{im} (N)))$.\medskip

  \item The probability of false detection doesn't exceed $\min\{\alpha (N), \exp(-C_2(\sigma) \varphi_{im} (N)) \}$ for all $N > N(\sigma)$.
\end{enumerate}

\noindent The constants $C_1 > 0$, $C_2 > 0$ and $N(\sigma)\in\mathbb{N}$ depend only on $\sigma$.

\end{theorem}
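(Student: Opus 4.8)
The plan is to establish the three assertions separately: the complexity claim is a direct accounting of the steps, while the two probabilistic statements rest on classical sharp estimates for site percolation on $\mathbb{Z}^2$. First I would dispose of (1). Step 0 maximizes (\ref{10}) or (\ref{11}), a one-dimensional optimization whose cost depends on $F$, $\sigma$ and $p_{c}^{site}$ but not on the number of pixels. Step 1 compares each of the $N^2$ observations to $\theta(N)$, costing $O(N^2)$. The graph $\overline{G}_N$ is a subgraph of the $N\times N$ grid, hence has $N^2$ vertices and at most $2N^2$ edges; by the depth-first-search analysis of \cite{Tarjan}, Step 2 runs in time $O(V+E)=O(N^2)$, and Steps 3--4 are $O(1)$. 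Summing gives the linear bound $O(N^2)$.

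For the power bound (2) I would first note that, by $\langle D1\rangle$, the object contains a black square $Q$ of side $\varphi_{im}(N)$. After thresholding, Lemma \ref{Lemma2} shows that each pixel of $Q$ is independently coloured black in $\{\overline{Y}_{ij}\}$ with probability $p_1 = 1 - F((\theta(N)-1)/\sigma)$, and (\ref{9}) gives $p_1 > p_{c}^{site}$; thus on $Q$ we observe \emph{supercritical} site percolation. I would then invoke the standard fact (see \cite{Grimmett}, \cite{Kesten}) that in the supercritical regime the probability of an open left--right crossing of an $n\times n$ box tends to one exponentially fast, the complementary probability being at most $\exp(-C_1(\sigma)n)$. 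Any such crossing is a connected black path of at least $\varphi_{im}(N)$ pixels, hence lies in a black cluster of $\overline{G}_N$ of size at least $\varphi_{im}(N)$, which the depth-first search of Step 2 is guaranteed to locate. This yields detection with probability at least $1-\exp(-C_1(\sigma)\varphi_{im}(N))$.

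For the false-alarm bound (3), under $H_0$ every true pixel is white, so by Lemma \ref{Lemma1} each pixel is black after thresholding with probability $p_0 = 1 - F(\theta(N)/\sigma)$, and (\ref{8}) forces $p_0 < p_{c}^{site}$; the whole screen is therefore \emph{subcritical}. Writing $C_v$ for the black cluster of a pixel $v$, the exponential decay of the cluster-size distribution in the subcritical phase (Menshikov's theorem; see \cite{Grimmett}, \cite{Kesten}) gives $P(|C_v|\geq k)\leq \exp(-c(\sigma)k)$, and a union bound over the $N^2$ starting pixels yields
\[
P\bigl(\exists\ \text{black cluster of size}\geq \varphi_{im}(N)\bigr) \leq N^2\exp(-c(\sigma)\varphi_{im}(N)).
\]
Assumption $\langle D2\rangle$, namely $\varphi_{im}(N)/\log N\to\infty$, absorbs the factor $N^2$: for $N>N(\sigma)$ the right-hand side is at most $\exp(-C_2(\sigma)\varphi_{im}(N))$ with $C_2=c/2$. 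Finally (\ref{12}) states $\log(1/\alpha(N))/\varphi_{im}(N)\to 0$, so for $N$ large this ratio lies below $C_2$, whence $\exp(-C_2(\sigma)\varphi_{im}(N))\leq \alpha(N)$; the two estimates combine to the claimed $\min\{\alpha(N),\exp(-C_2(\sigma)\varphi_{im}(N))\}$.

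The point requiring care, and the true locus of difficulty, is the uniformity of the constants. Because we fix $\alpha_0(N)\equiv\alpha_0$, the optimal threshold $\theta$ is independent of $N$, so $p_0$ and $p_1$ are fixed numbers depending only on $\sigma$ (and $\alpha_0$); their distances from $p_{c}^{site}$ are therefore bounded away from zero uniformly in $N$, and the rates $C_1(\sigma),C_2(\sigma)$ inherit dependence on $\sigma$ alone. The genuine mathematical content is thus not the bookkeeping above but the two percolation inputs --- exponential decay of subcritical cluster sizes and exponentially fast supercritical crossings --- whose validity for site percolation on $\mathbb{Z}^2$ I would quote from \cite{Grimmett} and \cite{Kesten} rather than reprove.
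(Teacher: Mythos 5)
Your proposal is correct, and for parts (1) and (2) of the theorem it coincides with the paper's argument: the same step-by-step accounting (constant-cost threshold selection, $O(N^2)$ thresholding, depth-first search in $O(V+E)=O(N^2)$ by \cite{Tarjan}), and the same supercritical crossing argument on the black square guaranteed by $\langle D1 \rangle$ --- including the same silent absorption of the polynomial prefactor, since the paper's Theorem \ref{Theorem4} gives $P_p(A_n)\geq 1-n e^{-C_4 n}$ and the paper likewise trades $n e^{-C_4 n}$ for $e^{-C_3 n}$ with $C_3<C_4$ at the cost of the $N>N(\sigma)$ restriction. The one genuine divergence is in the false-alarm bound. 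The paper proves a standalone result (Theorem \ref{Theorem2}) by combining the Aizenman--Newman subcritical bound (Theorem \ref{Theorem3}) with the FKG inequality for the decreasing events $\{|C(i,j)|<n\}$, obtaining $P(\mbox{all clusters} < n)\geq \bigl(1-e^{-n\lambda_{site}}\bigr)^{N^2}$ and then expanding the complement to extract the leading term $N^2 e^{-n\lambda_{site}}$. You replace this with a one-line union bound over the $N^2$ pixels, which yields exactly the same quantity $N^2\exp(-c(\sigma)\varphi_{im}(N))$ directly, after which both arguments absorb the $N^2$ factor identically via $\langle D2 \rangle$. Your route is strictly more elementary --- the FKG machinery is not actually needed for the upper bound, only for the lower bound on the no-large-cluster probability that the paper's expansion passes through --- and it buys a shorter, self-evidently monotone estimate; the paper's version buys, as a byproduct, a quantitative lower bound on $P(\mbox{no erroneous cluster of size } \geq n)$ that could be of independent use. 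Two further points where your write-up is actually more complete than the paper's: you spell out why $\exp(-C_2(\sigma)\varphi_{im}(N))\leq\alpha(N)$ for large $N$ (via (\ref{12})), justifying the $\min$ in part (3), which the paper leaves implicit, and you note explicitly that $\alpha_0(N)\equiv\alpha_0$ makes $\theta$, hence $p_0$, $p_1$ and the rate constants, depend on $\sigma$ alone, which the paper only gestures at in Remark \ref{Remark10}. The attribution of subcritical cluster-size decay to Menshikov rather than Aizenman--Newman is a cosmetic discrepancy, not a gap.
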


\begin{remark}\label{Remark10}
Dependence on $\sigma$ implicitly means dependence on $\theta(N)$ as well, but this doesn't spoil Theorem \ref{Theorem1}. Remember that we can consider $\theta(N)$ to be a function of $\sigma$ in view of our comments before (\ref{10}) and (\ref{11}).
\end{remark}

Theorem \ref{Theorem1} means that Algorithm 1 is of quickest possible order: it is \emph{linear} in the input size. It is difficult to think of an algorithm working quicker in this problem. Indeed, if the image is very small and located in an unknown place on the screen, or if there is no image at all, then any algorithm solving the detection problem will have to at least upload information about $O(N^2)$ pixels, i.e. under general assumptions of Theorem \ref{Theorem1}, any detection algorithm will have at least linear complexity.

Another important point is that Algorithm 1 is not only consistent, but that it has \emph{exponential} rate of accuracy.

\section{Proofs}\label{Section_Proofs}

This section is devoted to provide complete proofs of the above results. Some crucial estimates from percolation theory are also presented for the reader's convenience.

\begin{proof} (Theorem \ref{Theorem1}):

\noindent Part I. First we prove the complexity result. Finding a suitable (approximate, within a predefined error) $\theta$ from (\ref{10}) or (\ref{11}) takes a constant number of operations. See, for example, \cite{Krylov_Bobkov_Monastyrnyi}.

The $\theta(N)-$thresholding gives us $\{{\overline{Y}}_{i,j}\}_{i,j=1}^{N}$ and $\overline{G}_N$ in $O(N^2)$ operations. This finishes the analysis of Step 1.

As for Step 2, it is known (see, for example, \cite{Aho_Hopcroft_Ullman}, Chapter 5, or \cite{Tarjan}) that the standard depth-first search finishes its work also in $O(N^2)$ steps. It takes not more than $O(N^2)$ operations to save positions of all pixels in all clusters to the memory, since one has no more than $N^2$ positions and clusters. This completes analysis of Step 2 and shows that Algorithm 1 is linear in the size of input data.\par\smallskip

\noindent Part II. Now we prove the bound on the probability of false detection. Denote

\begin{equation}\label{15}
p_{out}(N)\,:=\,1 - F \biggr(\frac{\theta(N)}{\sigma}\biggr)\,,
\end{equation}

\noindent a probability of erroneously marking a white pixel outside of the image as black. Under assumptions of Theorem \ref{Theorem1}, $p_{out}(N) < p_{c}^{site}$.

We prove the following additional theorem:

\begin{theorem}\label{Theorem2}
Suppose that $0 < p_{out}(N) < p_{c}^{site}$. There exists a constant $C_3 = C_3 (p_{out}(N)) > 0$ such that

\begin{equation}\label{16}
P_{p_{out}(N)} (F_N(n))\,\leq\,\exp (\,-n \,C_3 (p_{out}(N)))\,,\quad\mbox{for all}\quad n \geq \varphi_{im} (N)\,.
\end{equation}

\noindent Here $F_N(n)$ is the event that there is an \emph{erroneously marked} black cluster of size greater or equal $n$, lying in the square of size $N \times N$ corresponding to the screen. (An erroneously marked black cluster is a black cluster on $G_N$ such that \emph{each} of the pixels in the cluster was wrongly coloured black after the $\theta-$thresholding.)
\end{theorem}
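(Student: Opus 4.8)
The plan is to recognize the event $F_N(n)$ as a large-cluster event in \emph{subcritical} site percolation and then to invoke the classical exponential decay of the cluster-size distribution below criticality. Under the null hypothesis the entire screen is white, so by Lemma \ref{Lemma1} (with $y = \theta(N)$) every pixel is coloured black independently with probability exactly $p_{out}(N) = 1 - F(\theta(N)/\sigma)$, as in (\ref{15}). Since $p_{out}(N) < p_{c}^{site}$ by hypothesis, the erroneously black pixels perform subcritical site percolation on the $N \times N$ box, and $F_N(n)$ is precisely the event that this percolation has an open cluster of size at least $n$.

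First I would recall the fundamental subcritical estimate (the Aizenman--Barsky--Menshikov exponential decay theorem; see \cite{Grimmett}, \cite{Kesten}): for every $p < p_{c}^{site}$ there is a constant $\psi(p) > 0$ such that the open cluster $C(v)$ of any fixed vertex $v$ satisfies $P_{p}(|C(v)| \geq n) \leq \exp(-\psi(p)\,n)$ for all $n$. Because in the present paper $\alpha_0(N) \equiv \alpha_0$ is constant, $\theta(N)$ and hence $p_{out}(N)$ do not actually depend on $N$, so $\psi(p_{out}(N))$ is a genuine constant; I would nonetheless keep the $N$-dependence in the notation for later generality.

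Next, to pass from the single-vertex bound to the existence of a large cluster \emph{somewhere} in the box, I would apply a union bound. If $F_N(n)$ occurs then the offending cluster contains some vertex $v$ of the screen, and then $|C(v)| \geq n$; hence $F_N(n) \subseteq \bigcup_{v}\{|C(v)| \geq n\}$ and
\[
P_{p_{out}(N)}\bigl(F_N(n)\bigr) \;\leq\; \sum_{v} P_{p_{out}(N)}\bigl(|C(v)| \geq n\bigr) \;\leq\; N^2 \exp\!\bigl(-\psi(p_{out}(N))\,n\bigr),
\]
where the sum ranges over the at most $N^2$ vertices of the $N \times N$ screen.

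The only delicate point—and the main obstacle—is to absorb the prefactor $N^2 = \exp(2\log N)$ into the exponential, since the statement of Theorem \ref{Theorem2} allows no polynomial factor in $N$. Writing the right-hand side as $\exp\bigl(2\log N - \psi(p_{out}(N))\,n\bigr)$, I would exploit assumption $\langle D2 \rangle$, which gives $\varphi_{im}(N)/\log N \to \infty$ and therefore $2\log N \leq \tfrac12\,\psi(p_{out}(N))\,\varphi_{im}(N) \leq \tfrac12\,\psi(p_{out}(N))\,n$ for all $n \geq \varphi_{im}(N)$, once $N$ exceeds some threshold (this threshold is what contributes to the $N(\sigma)$ of Theorem \ref{Theorem1}). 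Substituting this bound yields $P_{p_{out}(N)}(F_N(n)) \leq \exp\bigl(-\tfrac12\,\psi(p_{out}(N))\,n\bigr)$, so the claim holds with $C_3(p_{out}(N)) := \tfrac12\,\psi(p_{out}(N)) > 0$. Thus the naive union bound costs a polynomial factor in $N$, and it is exactly the largeness assumption $\langle D2 \rangle$—the object being large compared with $\log N$—that lets us pay for that factor by sacrificing half of the subcritical decay rate.
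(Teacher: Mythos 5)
Your proposal is correct, and it reaches the bound by a genuinely more elementary route than the paper at the one combinatorial step. Where you pass from the single-vertex estimate to the whole screen by the naive union bound $P(F_N(n)) \leq \sum_v P(|C(v)| \geq n) \leq N^2 e^{-\psi n}$, the paper instead invokes the FKG inequality for the decreasing events $\{|C(i,j)| < n\}$ to get $P(\text{all clusters} < n) \geq \bigl(1 - e^{-n\lambda_{site}}\bigr)^{N^2}$, and then expands $1 - (1 - e^{-n\lambda_{site}})^{N^2}$ binomially to extract the leading term $N^2 e^{-n\lambda_{site}} + o\bigl(N^2 e^{-n\lambda_{site}}\bigr)$. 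Since $1 - (1-x)^{N^2} \leq N^2 x$ anyway, the two routes produce the same estimate, and your version avoids both the FKG machinery and the asymptotic expansion (whose $o(\cdot)$ control itself needs $N^2 e^{-n\lambda_{site}} \to 0$, i.e.\ the same use of $\langle D2 \rangle$). Both proofs then absorb the $N^2 = e^{2\log N}$ prefactor identically, via $n \geq \varphi_{im}(N)$ and $\log N = o(\varphi_{im}(N))$ from $\langle D2 \rangle$, at the cost of shrinking the decay constant; you make this explicit with $C_3 = \tfrac12 \psi(p_{out})$ and a threshold on $N$, whereas the paper leaves it as some $0 < C_3 < \lambda_{site}(p_{out})$ with the $N$-threshold implicit (it surfaces as $N(\sigma)$ in Theorem \ref{Theorem1}, part 3). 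Your input estimate is the general Aizenman--Barsky--Menshikov subcritical decay rather than the Aizenman--Newman result the paper cites via \cite{Bollobas_Riordan}; for site percolation on $\mathbb{Z}^2$ below $p_c^{site}$ these supply the same exponential tail, and your implicit use of monotonicity (a cluster confined to the box is no larger than the corresponding cluster in $\mathbb{Z}^2$) matches the paper's remark that restricting to the finite screen only helps. In short: same skeleton, but a union bound in place of FKG buys simplicity and an explicit constant, while the paper's FKG detour buys nothing extra here.
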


Before proving this result, we state the following theorem about subcritical site percolation.


\begin{theorem}(Aizenman-Newman)\label{Theorem3}
Consider site percolation with probability $p_0$ on $\mathbb{Z}^2$. There exists a constant $\lambda_{site} = \lambda_{site}(p_0) > 0$ such that

\begin{equation}\label{17}
P_{p_0} (\,|C| \geq n\,)\,\leq\, e^{-n\,\lambda_{site}(p_0)}\,,\quad\mbox{for all}\quad n \geq 1\,.
\end{equation}

\noindent Here $C$ is the open cluster containing the origin.
\end{theorem}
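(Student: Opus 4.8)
The plan is to prove exponential decay of the cluster volume in the subcritical regime $p_0 < p_c^{site}$, which is the hypothesis implicitly in force here: for $p_0 \ge p_c^{site}$ there is a positive probability of an infinite cluster, so $P_{p_0}(|C| \ge n)$ does not even tend to $0$ and no such bound can hold. I would begin with the elementary first-moment estimate, both because it settles part of the range and because it isolates the genuine difficulty. For a fixed finite connected set (lattice animal) $A \ni 0$ with $|A| = n$, the event $\{C = A\}$ forces all $n$ sites of $A$ to be open, so $P_{p_0}(C = A) \le p_0^{\,n}$. Since the number of site animals of size $n$ containing the origin is at most $\kappa^n$ for a purely combinatorial growth constant $\kappa$, this gives $P_{p_0}(|C| = n) \le (\kappa p_0)^n$, hence the claimed bound with rate $\log\bigl(1/(\kappa p_0)\bigr)$ — but only when $\kappa p_0 < 1$. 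As $1/\kappa$ is strictly smaller than $p_c^{site}$, this crude estimate misses most of the subcritical phase, which is exactly where the work lies.

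To reach every $p_0 < p_c^{site}$ I would instead aim at a finite exponential moment $\mathbb{E}_{p_0}\,e^{\gamma|C|} < \infty$ for some $\gamma = \gamma(p_0) > 0$, and then conclude by Markov's inequality. This finite exponential moment rests on two classical inputs. First, sharpness of the phase transition (the Aizenman--Barsky--Menshikov theorem) guarantees that the susceptibility $\chi(p_0) := \mathbb{E}_{p_0}|C|$ is finite for every $p_0 < p_c^{site}$. Second, the Aizenman--Newman tree-graph inequalities control the higher moments in terms of the first, yielding a bound of the qualitative form $\mathbb{E}_{p_0}|C|^k \le k!\,M(p_0)^k$ with $M(p_0)$ depending only on $\chi(p_0)$. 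Summing the series $\sum_k \gamma^k \mathbb{E}_{p_0}|C|^k / k!$ then gives $\mathbb{E}_{p_0}\,e^{\gamma|C|} < \infty$ for every $\gamma < 1/M(p_0)$.

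With a finite exponential moment in hand, Markov's inequality yields $P_{p_0}(|C| \ge n) \le e^{-\gamma n}\,\mathbb{E}_{p_0}e^{\gamma|C|}$, i.e. exponential decay up to a multiplicative constant $K := \mathbb{E}_{p_0}e^{\gamma|C|}$. To obtain the clean form $e^{-n\,\lambda_{site}}$ valid for all $n \ge 1$, I would absorb the prefactor: for $n$ beyond some $n_0$ the factor $K$ is dominated simply by halving the rate, while for the finitely many smaller $n$ one uses the trivial bound $P_{p_0}(|C| \ge n) \le P_{p_0}(|C| \ge 1) = p_0 < 1$. Setting $\lambda_{site}(p_0) = \min\bigl\{\gamma/2,\ \log(1/p_0)/(n_0-1)\bigr\} > 0$ then makes the bound hold uniformly in $n \ge 1$.

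The main obstacle is the first input: showing that the susceptibility stays finite throughout the subcritical phase is precisely the assertion that the percolation transition is sharp, a deep theorem. Everything downstream — the moment-to-exponential-moment upgrade through the tree-graph inequalities and the cosmetic removal of the constant prefactor — is comparatively mechanical. Since the present paper only needs the statement as a cited tool, one may legitimately invoke sharpness and the tree-graph inequalities as black boxes (e.g. from \cite{Grimmett}); the sole content that must be verified for our application is that the resulting rate $\lambda_{site}(p_0)$ is \emph{strictly positive} for each fixed $p_0 < p_c^{site}$, which the construction above delivers.
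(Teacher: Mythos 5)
Your outline is correct, but note that the paper does not actually prove this theorem: its entire ``proof'' is the citation to Bollob\'as--Riordan, so the comparison is between your reconstruction and the literature the paper defers to. Your decomposition is precisely the classical argument matching the attribution in the statement: sharpness of the transition (Aizenman--Barsky/Menshikov) gives $\chi(p_0)<\infty$ throughout $p_0<p_c^{site}$, and the Aizenman--Newman tree-graph inequalities give $\mathbb{E}_{p_0}|C|^k\le k!\,M(p_0)^k$, hence a finite exponential moment and the tail bound via Markov. Two things you handle are genuinely needed and easy to miss. First, the statement as printed omits the hypothesis $p_0<p_c^{site}$, without which it is false (take $p_0=1$); you flag this correctly, and in the paper's application the hypothesis is indeed in force, since Theorem 2 assumes $p_{out}(N)<p_c^{site}$. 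Second, the constant-free form $e^{-n\lambda_{site}}$ valid for \emph{all} $n\ge1$ --- which the paper's Theorem 2 computation uses verbatim --- does not follow directly from an exponential-Chebyshev bound with prefactor $K=\mathbb{E}_{p_0}e^{\gamma|C|}$, and your absorption step (halve the rate for $n\ge n_0$, use the trivial bound $P_{p_0}(|C|\ge n)\le p_0<1$ for the finitely many smaller $n$) closes exactly that cosmetic but real gap; many textbook statements stop at the prefactor form. Your lattice-animal warm-up is fine as motivation and correctly acknowledged as covering only $p_0<1/\kappa$.

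One small inaccuracy in a non-load-bearing aside: at $p_0=p_c^{site}$ on $\mathbb{Z}^2$ it is known that $\theta(p_c^{site})=0$, so there is \emph{no} infinite cluster at criticality; the correct reason the bound fails there is that $P_{p_c}(|C|\ge n)$ decays only polynomially, not that it fails to tend to $0$. This does not affect anything downstream in your argument.
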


\begin{proof} (Theorem \ref{Theorem3}): See \cite{Bollobas_Riordan}.

\end{proof}

To conclude Theorem \ref{Theorem2} from Theorem \ref{Theorem3}, we will use the celebrated FKG inequality (see \cite{FKG}, or \cite{Grimmett}, Theorem 2.4, p.34; see also Grimmett's book for some explanation of the terminology).

\begin{theorem}\label{TheoremFKG}
If $A$ and $B$ are both increasing (or both decreasing) events on the same measurable pair $(\Omega , \mathcal{F})$, then $P(A \cap B)\,\geq\, P(A)\,P(B)\,$.
\end{theorem}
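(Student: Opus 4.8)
The plan is to establish the FKG inequality in the precise form that our application needs, namely for the \emph{product} (Bernoulli) measure $P_{p_{out}(N)}$ on configurations $\omega \in \{0,1\}^{V}$, where $V$ is the \emph{finite} set of the $N^2$ sites of the screen and ``increasing'' is understood with respect to the coordinatewise partial order on $\{0,1\}^{V}$. First I would reduce the statement about events to a statement about functions: writing $P(A\cap B) = \mathbb{E}[\mathbf{1}_A\,\mathbf{1}_B]$ and noting that the indicator of an increasing event is a bounded increasing function of $\omega$, it suffices to prove the positive-correlation inequality $\mathbb{E}[fg] \geq \mathbb{E}[f]\,\mathbb{E}[g]$ for any two bounded increasing functions $f, g$. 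The case of two decreasing events then reduces to this one by the coordinatewise flip $\omega \mapsto \mathbf{1}-\omega$, which sends the product measure to another product measure and turns decreasing events into increasing ones while preserving intersections.

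The core is an induction on the number of sites $|V|$. For the base case $|V| = 1$ I would use the standard two-copies argument: if $X, X'$ are independent Bernoulli variables, then, because $f$ and $g$ are increasing, the product $(f(X)-f(X'))(g(X)-g(X'))$ is always nonnegative, and taking expectations yields $2\bigl(\mathbb{E}[fg] - \mathbb{E}[f]\,\mathbb{E}[g]\bigr) \geq 0$. For the inductive step I would condition on the value of the last coordinate $\omega_v$: given $\omega_v$, the functions $f$ and $g$ remain increasing in the remaining $|V|-1$ coordinates, which are independent of $\omega_v$ and again carry a product measure, so the inductive hypothesis gives $\mathbb{E}[fg \mid \omega_v] \geq \mathbb{E}[f\mid \omega_v]\,\mathbb{E}[g\mid\omega_v]$. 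Setting $F(\omega_v) := \mathbb{E}[f\mid\omega_v]$ and $G(\omega_v):= \mathbb{E}[g\mid\omega_v]$, I would then apply the one-site base case to $F$ and $G$ to get $\mathbb{E}[FG] \geq \mathbb{E}[F]\,\mathbb{E}[G] = \mathbb{E}[f]\,\mathbb{E}[g]$, and combine the two inequalities through the tower property $\mathbb{E}[fg] = \mathbb{E}\bigl[\mathbb{E}[fg\mid\omega_v]\bigr]$.

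The one delicate point — the step I expect to be the main obstacle — is verifying that the conditional expectations $F$ and $G$ are themselves \emph{increasing} in $\omega_v$, so that the base case legitimately applies to them. This is exactly where the product structure is essential: since the remaining coordinates are independent of $\omega_v$, a monotone coupling comparison shows that raising $\omega_v$ from $0$ to $1$ can only increase the conditional average of an increasing function, and I would spell this out by directly comparing the two conditional averages. Because our screen has only finitely many ($N^2$) sites, this finite induction completes the argument and no limiting procedure is required; in particular the full generality of the abstract statement (an arbitrary measurable pair, or the general FKG lattice condition for non-product measures, whose proof requires the considerably harder Holley-type coupling) is never invoked, and I would simply record the product-measure version, which is the only one used in deducing Theorem~\ref{Theorem2} from Theorem~\ref{Theorem3}.
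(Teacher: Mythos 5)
Your proof is correct, but note that the paper itself offers no proof of Theorem \ref{TheoremFKG}: it is stated as a known result with a pointer to the original FKG paper and to Grimmett's book (Theorem 2.4 there), so any actual argument you give is necessarily a different route from the paper's. What you have written out is the classical Harris argument for \emph{product} measures: reduction from events to bounded increasing functions, the two-copies (Chebyshev) inequality for a single coordinate, induction over the finitely many sites via conditioning on one coordinate, and the key verification --- which you rightly flag as the delicate step --- that the conditional expectations $F(\omega_v)=\mathbb{E}[f\mid\omega_v]$ and $G(\omega_v)=\mathbb{E}[g\mid\omega_v]$ inherit monotonicity from independence of the remaining coordinates; this is sound, since $F(1)-F(0)=\mathbb{E}\bigl[f(\omega_{-v},1)-f(\omega_{-v},0)\bigr]\geq 0$ pointwise, and the coordinate flip $\omega\mapsto\mathbf{1}-\omega$ correctly disposes of the decreasing case. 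This is in substance the standard textbook proof standing behind the paper's citation, so what your version buys is self-containedness rather than novelty. Your closing observation is, however, a genuine improvement on the paper's formulation: as literally stated, for an arbitrary ``measurable pair $(\Omega,\mathcal{F})$'' the claim is not even well-posed, let alone true --- one needs a partial order on $\Omega$ to define increasing events, and positive association fails for general measures (it requires the FKG lattice condition, proved via the harder Holley coupling, or the product structure you exploit). Since the paper's only use of Theorem \ref{TheoremFKG} is in deducing Theorem \ref{Theorem2} from Theorem \ref{Theorem3}, where it is applied under the Bernoulli site-percolation measure $P_{p_{out}(N)}$ to the finitely many decreasing events $\{\,|C(i,j)| < n\,\}$ on the $N\times N$ screen, your finite product-measure version is exactly the statement needed, and your remark that no limiting procedure or non-product generality is invoked is accurate.
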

\medskip

\begin{proof} (Theorem \ref{Theorem2}):
Denote by $C(i,j)$ the largest cluster in the $N\times N$ screen containing the pixel with coordinates $(i, j)$, and by $C(0)$ the largest black cluster on the $N \times N$ screen containing 0. By Theorem \ref{Theorem3}, for all $i$, $j$: $1 \leq i, j \leq N$:

\begin{eqnarray}\label{19}
  P_{p_{out}(N)} (\,|C(0)| \geq n\,) & \leq & e^{-n\,\lambda_{site}(p_{out})}\,, \\
  P_{p_{out}(N)} (\,|C(i, j)| \geq n\,) & \leq & e^{-n\,\lambda_{site}(p_{out})}\,. \nonumber
\end{eqnarray}

\noindent Obviously, it only helped to inequalities (\ref{17}) and (\ref{19}) that we have limited our clusters to only a finite subset instead of the whole lattice $\mathbb{Z}^2$. On a side note, there is no symmetry anymore between arbitrary points of the $N \times N$ finite square; luckily, this doesn't affect the present proof.

Since $\{\,|C(0)| \geq n\,\}$ and $\{\,|C(i, j)| \geq n\,\}$ are increasing events (on the measurable pair corresponding to the standard random-graph model on $G_N$), we have that $\{\,|C(0)| < n\,\}$ and $\{\,|C(i, j)| < n\,\}$ are decreasing events for all $i$, $j$. By FKG inequality for decreasing events,

\begin{eqnarray*}
  P_{p_{out}(N)} ( \,|C(i, j)| < n\, \,\mbox{for all}\,\, i, j, 1 \leq i, j \leq N\,) & \geq &  \\
  \prod\prod_{1 \leq i, j \leq N} P_{p_{out}(N)} ( \,|C(i, j)| < n\,) & \geq & (\mbox{by (\ref{19})}) \\
   & \geq & {\bigr(\,1 - e^{-n\,\lambda_{site}(p_{out})}\,\bigr)^{N^2}}\,.
\end{eqnarray*}

\noindent We denote below by $C_{b}^{a}$ the "$a$ out of $b$" binomial coefficient. It follows that

\begin{eqnarray*}
  P_{p_{out}(N)} (F_N(n)) & = & P_{p_{out}(N)} \bigr( \,\exists (i, j),\, 1 \leq i, j \leq N\,: \,|C(i, j)| \geq n\, \bigr) \\
    & \leq & 1 -  {\bigr(\,1 - e^{-n\,\lambda_{site}(p_{out})}\,\bigr)^{N^2}} \\
    &=& 1 - \sum_{k=0}^{N^2} {(-1)}^k \,C_{N^2}^k \, e^{-n\,\lambda_{site}(p_{out})\,k} \\
    &=& \sum_{k=1}^{N^2} {(-1)}^{k-1} \,C_{N^2}^k \, e^{-n\,\lambda_{site}(p_{out})\,k} \\
    &=& N^2 e^{-n\,\lambda_{site}(p_{out})}\,+\,o\bigr(N^2 e^{-n\,\lambda_{site}(p_{out})}\bigr)\,,
\end{eqnarray*}

\noindent because we assumed in (\ref{16}) that $n \geq \varphi_{im} (N)$, and $\log N = o(\varphi_{im} (N))$. Moreover, we see immediately that Theorem \ref{Theorem2} follows now with some $C_3$ such that $0 < C_3 (p_{out}(N)) < \lambda_{site}(p_{out}(N))$.
\end{proof}

The exponential bound on the probability of false detection follows from Theorem \ref{Theorem2}.\par\smallskip

\noindent Part III. It remains to prove the lower bound on the probability of true detection. First we prove the following theorem:

\begin{theorem}\label{Theorem4}
Consider site percolation on $\mathbb{Z}^2$ lattice with percolation probability $p > p_c^{site}$. Let $A_n$ be the event that there is an open path in the rectangle $[0, n] \times [0, n]$ joining some vertex on its left side to some vertex on its right side. Let $M_n$ be the maximal number of vertex-disjoint open left-right crossings of the rectangle $[0, n] \times [0, n]$. Then there exist constants $C_4 = C_4 (p) > 0$, $C_5 = C_5 (p) > 0$, $C_6 = C_6 (p) > 0$ such that

\begin{equation}\label{20}
P_p (A_n) \,\geq\, 1 - n\,e^{-C_4\,n}\,,
\end{equation}

\begin{equation}\label{21}
P_p (\,M_n \leq C_5\,n\,) \,\leq\, e^{-C_6\,n}\,,
\end{equation}

\noindent and both inequalities holds for \emph{all} $n \geq 1$.
\end{theorem}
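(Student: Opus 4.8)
The plan is to derive both inequalities from planar duality together with the exponential estimates of subcritical site percolation. For (\ref{20}) I would use the duality between open and closed crossings: the box $[0,n]\times[0,n]$ has no open left--right crossing if and only if its closed sites contain a top--bottom crossing in the matching (eight-connected) lattice. Since $p>p_c^{site}$ and the matching pair satisfies $p_c^{site}(\mathbb{Z}^2)+p_c^{site}(\mathrm{matching})=1$, the closed sites have density $1-p<p_c^{site}(\mathrm{matching})$ and are therefore strictly subcritical on the matching lattice. A closed top--bottom crossing forces some closed matching-cluster touching the top edge to have size at least $n$, so the estimate of Theorem \ref{Theorem3}, applied on the matching lattice, bounds its probability by $e^{-cn}$. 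A union bound over the at most $n+1$ sites of the top edge then gives $P_p(A_n^{c})\le (n+1)\,e^{-cn}\le n\,e^{-C_4 n}$, which is (\ref{20}).

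For the lower bound (\ref{21}) I would pass to the dual cut. By Menger's theorem the maximal number $M_n$ of vertex-disjoint open left--right crossings equals the minimum size of a set of open sites meeting every open crossing, and in the plane any such separating set carries a top--bottom path $\gamma$ in the matching lattice whose open sites it contains; hence $M_n \ge \min_{\gamma} \#\{\text{open sites on }\gamma\}$, the minimum ranging over top--bottom matching paths. Every such $\gamma$ has at least $n$ sites, and along a fixed self-avoiding $\gamma$ the openness indicators are independent $\mathrm{Bernoulli}(p)$, so a Chernoff bound yields $P_p(\#\{\text{open on }\gamma\}\le C_5 n)\le e^{-I(C_5)\,|\gamma|}$ with $I(C_5)>0$ whenever $C_5<p$ (using $|\gamma|\ge n$). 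Summing this over the at most $n\,\mu^{\ell}$ top--bottom matching paths of length $\ell\ge n$, where $\mu$ is the connective constant of the matching lattice, would bound $P_p(M_n\le C_5 n)$.

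The main obstacle is that this union bound closes only when the large-deviation rate beats the path entropy, that is when $I(C_5)>\log\mu$; for small $C_5$ this forces $1-p<1/\mu$, so the naive estimate covers only $p$ close to $1$ rather than every supercritical $p$. To reach all $p>p_c^{site}$ I would first coarse-grain: partition $[0,n]^2$ into $L\times L$ blocks, call a block good when it and its neighbours are crossed in both coordinate directions, and invoke a Grimmett--Marstrand type comparison so that for $L$ large the good blocks stochastically dominate site percolation of density as close to $1$ as desired. A good left--right path of blocks carries an open left--right crossing at the original scale, and disjoint block-paths give vertex-disjoint original crossings; it therefore suffices to prove the linear lower bound for the renormalized, near-$1$ process, where the Chernoff-versus-entropy count above does close. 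Tracking the constants through this rescaling, and checking that the $O(L^2)$ sites lost per block only rescale $C_5$ and $C_6$, is the delicate step; by contrast the crossing bound (\ref{20}) is comparatively routine once the subcritical decay on the matching lattice is in hand.
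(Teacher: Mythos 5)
Your proposal is essentially correct, and for (\ref{20}) it coincides with the standard route: duality between $4$-connected open crossings and $8$-connected closed crossings of the matching lattice, the Russo--Kesten relation $p_c^{site}(\mathbb{Z}^2)+p_c^{site}(\mathrm{matching})=1$, and the subcritical exponential decay of Theorem \ref{Theorem3} applied on the matching lattice. For (\ref{21}), however, you take a genuinely different path from the paper, whose entire proof is a citation: adapt Grimmett's Lemma 11.22 (\emph{Percolation}, pp.~294--295) from bond to site percolation. The cited argument starts exactly as yours does --- Menger's theorem reduces $\{M_n \le k\}$ to the existence of a top--bottom blocking path in the matching lattice carrying at most $k$ open ``defects'' --- but it closes the union bound differently: instead of counting raw self-avoiding paths against a Chernoff bound (which, as you correctly diagnose, beats the connective-constant entropy only for $p$ near $1$), one chains the closed segments between consecutive defects using the exponential decay of the subcritical connectivity function, $\tau(x,y)\le e^{-|x-y|/\xi}$, together with the BK inequality. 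The path entropy is absorbed into that decay, each defect costs only a constant factor of the form $\sum_{x} e^{-|x|/(2\xi)}<\infty$, and taking $C_5$ small then yields (\ref{21}) for \emph{every} $p>p_c^{site}$ with no coarse-graining at all. Your renormalization repair is workable and standard, but heavier, and two details deserve flags: ``Grimmett--Marstrand'' is the wrong tool to invoke (it concerns slab percolation in $d\ge 3$; in two dimensions the fact that block-crossing probabilities tend to one is precisely your own inequality (\ref{20}) at scale $L$, and the comparison with a high-density product measure is Liggett--Schonmann--Stacey domination of a finitely dependent field), and arranging that vertex-disjoint block paths produce vertex-disjoint crossings requires the usual careful definition of good blocks via overlapping sub-rectangles --- exactly the step you yourself flag as delicate and leave unexecuted. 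In terms of what each approach buys: yours is self-contained modulo standard renormalization technology and would survive weaker inputs than exponential decay of connectivities, while the defect-chaining argument the paper points to is shorter, reuses the same subcritical estimate already quoted as Theorem \ref{Theorem3} (now on the matching lattice), and covers all supercritical $p$ in one stroke.
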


\begin{proof} (Theorem \ref{Theorem4}):
One proves this by a slight modification of the corresponding result for bond percolation on the square lattice. See proof of Lemma 11.22 and pp. 294-295 in \cite{Grimmett}.

\end{proof}

Now suppose that we have an object in the picture that satisfies assumptions of Theorem \ref{Theorem1}. Consider any $\varphi_{im} (N) \times \varphi_{im} (N)$ square in this image. After $\theta-$thresholding of the picture by Algorithm 1, we observe on the selected square site percolation with probability

\[
p_{im}(N) \,:=\, 1 - F \biggr(\frac{\theta(N)-1}{\sigma}\biggr) \,>\, p_{c}^{site}\,.
\]

\noindent Then, by (\ref{20}) of Theorem \ref{Theorem4}, there exists $C_4 = C_4 (p_{im}(N))$ such that there will be \emph{at least one} cluster of size not less than $\varphi_{im} (N)$ (for example, one could take any of the existing left-right crossings as a part of such cluster), provided that $N$ is bigger than certain $N(p_{im}(N)) = N(\sigma)$; and all that happens with probability at least

\[
1 - n\,e^{-C_4\,n} \,>\, 1 - e^{-C_3\,n}\,,
\]

\noindent for some $C_3$: $0 < C_3 < C_4$. Theorem \ref{Theorem1} is proved.

\end{proof}


\smallskip
\noindent {\bf Acknowledgments.} The authors would like to thank Laurie Davies, Remco van der Hofstad, Artem Sapozhnikov, Shota Gugushvili and Geurt Jongbloed for helpful discussions. \\

\bibliographystyle{plainnat}
\bibliography{Randomized_Algorithms_and_Percolation}


\end{document}